\title[Model-Free \(\mu\)-Synthesis: A Nonsmooth Optimization Perspective]{Model-Free \(\mu\)-Synthesis: A Nonsmooth Optimization Perspective}
\newcommand{\norm}[1]{\left\|#1\right\|}
\newcommand{\field}[1]{\mathbb{#1}}
\newcommand{\R}{\field{R}}
\DeclareMathOperator*{\dist}{dist}
\newcommand{\bmat}[1]{\begin{bmatrix}#1\end{bmatrix}}
\newcommand{\bmtx}[1]{\left[\begin{array}{#1}}
\newcommand{\emtx}{\end{array}\right]} 
\newcommand{\bsmtx}{\left[ \begin{smallmatrix}}
\newcommand{\esmtx}{\end{smallmatrix} \right]} 
\author{%
 \Name{Darioush Keivan}  \Email{dk12@illinois.edu}\\
 \Name{Xingang Guo} \Email{xingang2@illinois.edu} \\
\Name{Peter Seiler} \Email{pseiler@umich.edu}\\
\Name{Geir Dullerud} \Email{dullerud@illinois.edu}\\
\Name{Bin Hu} \Email{binhu7@illinois.edu}
}
\begin{document}

\maketitle

\begin{abstract}%
In this paper, we revisit model-free policy search on 
an important robust control benchmark, namely  \(\mu\) synthesis. In the general output-feedback setting, there do not exist convex formulations for this problem, and hence global optimality guarantees are not expected. \citet{apkarian2011nonsmooth} presented a nonconvex nonsmooth policy optimization approach for this problem, and achieved state-of-the-art design results via using subgradient-based policy search algorithms which generate update directions in a model-based manner. Despite the lack of convexity and global optimality guarantees, these subgradient-based policy search methods have led to impressive numerical results in practice.
Built upon such a policy optimization persepctive, our paper extends these subgradient-based search methods to a model-free setting. Specifically, we examine the effectiveness of two model-free policy optimization strategies:  the model-free non-derivative sampling method and the zeroth-order policy search with uniform smoothing.  We performed an extensive numerical study to demonstrate that both methods consistently replicate the design outcomes achieved by their model-based counterparts. Additionally, we provide some theoretical justifications showing that convergence guarantees to stationary points can be established for our model-free $\mu$-synthesis under some assumptions related to the coerciveness of the cost function. 
Overall, our results demonstrate that derivative-free policy optimization offers a competitive and viable approach for solving general output-feedback \(\mu\)-synthesis problems in the model-free setting.

\end{abstract}

\begin{keywords}%
  Model-free \(\mu\)-synthesis, direct policy search, nonsmooth optimization, zeroth-order optimization

\end{keywords}

\section{Introduction}

In recent years, the empirical success of reinforcement learning (RL) has significantly impacted the controls field, sparking increased interest in direct policy search methods. Various properties of policy optimization (PO) have been established across many standard control benchmark problems~\citep{hu2022towards}, including linear quadratic regulators (LQR) \citep{pmlr-v80-fazel18a, bu2019lqr,malik2019derivative, yang2019provably, mohammadi2021convergence, furieri2020learning, hambly2021policy,fatkhullin2020optimizing,duan2021optimization}, stabilization \citep{perdomo2021stabilizing, ozaslan2022computing}, linear robust/risk-sensitive control \citep{zhang2021policy,zhang2020stability,zhang2021derivative,gravell2020learning,zhao2021primal,guo2022global,guo2023complexity,tang2023global}, linear quadratic Gaussian (LQG) \citep{zheng2021analysis,zheng2023benign,zheng2022escaping,hu2022connectivity}, and Markov jump linear quadratic control \citep{jansch2020convergence,jansch2022policy,jansch2020policyMDP,rathod2021global}. Many of the above results (implicitly) rely on the fundamental connections between the nonconvex policy optimization formulations and the existing higher-dimensional convex synthesis reformulations \citep{scherer1997multiobjective, boyd1994linear, Gahinet1994, Scherer2004}. 
However, there are important linear robust control problems that do not have convex reformulations in the first place. 
In this paper, we will look at one of such problems, namely the general output-feedback $\mu$-synthesis. 
The $\mu$-synthesis has long been a cornerstone in robust control, dealing with systems affected by uncertainties \citep{packard93musyn, honda14ACC_HDD,zhou1996robust,dullerud2013course}. The goal of $\mu$-synthesis is to design a controller that stabilizes the closed-loop dynamics and minimizes the so-called structured singular value (or equivalently robust performance) at the same time. 
Traditional methods for addressing $\mu$-synthesis have typically centered around finding upper bounds using \(DK\)-iteration techniques~\citep{zhou1996robust}. Later, \cite{apkarian2011nonsmooth} innovatively reformulated the general $\mu$-synthesis as a nonconvex, nonsmooth, model-based policy optimization problem such that subgradient-based search techniques can be directly applied to achieve state-of-the-art results and even outperform \(DK\)-iteration on many examples.
Dealing with the nonconvex nonsmooth optimization in \cite{apkarian2011nonsmooth} is highly non-trivial, i.e.  one needs to enlarge the Clarke subdifferential in some novel way for the purpose of generating good descent directions. The original work in \cite{apkarian2011nonsmooth} relies on a frequency-domain technique (which is quite similar to the \texttt{Hinfstruct} solver \citep{gahinet2011structured} from the MATLAB robust control toolbox). In principle, one can also use the gradient sampling technique   \citep{burke2020gradient,burke2005robust,kiwiel2007convergence}\footnote{Interestingly,
 the \(\mathcal{H}_\infty\) fixed-order optimization (\texttt{HIFOO}) solver \citep{arzelier2011h2,gumussoy2009multiobjective}) was developed based on such an alternative choice of algorithms.}. 
In this work, we extend the nonsmooth optimization perspective on $\mu$-synthesis \citep{apkarian2011nonsmooth} to the model-free setting. 
Notice that the model-free state-feedback \(\mu\)-synthesis has been previously addressed via combining \(DK\)-iteration and a central-path algorithm that adopts robust adversarial reinforcement learning (RARL) as subroutines for finding analytical center in the \(K\) step~\citep{keivan2021model}. 
However, such an approach is not directly applicable in the general output-feedback setting. 
Alternatively, we examine the effectiveness of two model-free policy search strategies that are deeply connected to nonsmooth optimization theory: the model-free non-derivative sampling method and the zeroth-order policy search with uniform smoothing.

Our findings indicate that both methods consistently yield design solutions comparable to those achieved by conventional model-based approaches, such as \(DK\)-iteration method. Similar to their model-based counterparts \citep{apkarian2011nonsmooth}, our proposed model-free methods even outperform \(DK\)-iterations in some cases. Additionally, we provide some theoretical justifications showing that convergence guarantees to stationary points can be established for our model-free $\mu$-synthesis under some assumptions related to the coerciveness of the cost function. Our theoetical developments extend recently-developed convergence/complexity results on $\mathcal{H}_\infty$ policy search~\citep{guo2022global,guo2023complexity}. These outcomes underscore the potential of direct policy search via zeroth-order optimization as a viable and competitive approach for addressing the general output-feedback $\mu$-synthesis in the model-free~setting.

\section{Problem Formulations and Preliminaries} \label{sec:2}
\paragraph{Setup of $\mu$-synthesis.} For the rest of this paper, let $G$ denote the following linear time-invariant (LTI) system:
\begin{align}
\label{eq_system model}
\begin{split}
    x_{t+1} &= A x_t + B_w w_t + B_d d_t + B_u u_t, \\
v_{t} &= C_v x_t + D_{vw} w_t + D_{vd} d_t + D_{vu} u_t, \\
e_t &= C_e x_t + D_{ew} w_t + D_{ed} d_t + D_{eu} u_t, \\
y_t &= C_y x_t + D_{yw} w_t + D_{yd} d_t + D_{yu} u_t,
\end{split}
\end{align}
where $x_t\in \R^{n_x}$ is the system state, $u_t\in\R^{n_u}$ is the control input, $d_t \in \R^{n_d}$ is the exogenous disturbance, $e_t \in \R^{n_e}$ is the performance signal, $y_t \in \R^{n_y}$ is the
output measurement, \(v_t \in \R^{n_v}\) is the uncertainty input, and \( w_t \in \R^{n_w} \) is the uncertainty output.
To start, we consider the standard robust synthesis interconnection as shown in the left-side sub-figure of Figure~\ref{fig:musyn}.  Let \(\mathcal{F}_l(G,K)\) denote the feedback interconnection of \(G\) and \(K\).
The pair \( (v, w) \) satisfies the relation \( w = \Delta (v) \), where \( \Delta \) is a mapping within a \emph{cone} \( \mathbf{\Delta} \) of structured bounded linear time-invariant (LTI) operators. 
The term \(\mathbf{\Delta}\) denotes the uncertainty set \citep{zhou1996robust,dullerud2013course}. The main objective of robust synthesis is to design a controller that stabilizes the closed-loop dynamics and optimizes the robust performance\footnote{A controller $K$ achieves \emph{Robust Performance of level $\gamma$} if for all $\Delta \in \mathbf{\Delta}$ satisfying $\|\Delta\|_\infty \le \frac{1}{\gamma}$, the
closed-loop system is well-posed, stable, and has the mapping from $d$ to $e$ satisfying
$\|T_{d\mapsto e}(\Delta)\|_\infty \leq \gamma$.}  at the same time.  However, verifying robust performance is inherently intractable, prompting a shift in focus towards establishing an upper bound. This is achieved by introducing a set of positive scaling functions \(\mathbf{D}\), each satisfying \(D\Delta = \Delta D\) for all \(\Delta \in \mathbf{\Delta}\). The block diagram of the scaled system is shown in the middle sub-figure of Figure~\ref{fig:musyn}. To optimize this upper bound on robust performance, one eventually needs to minimize the \(\ell_2\) gain from scaled inputs \((\tilde{w},d)\) to scaled outputs \((\tilde{v},e)\) using appropriate \(D\)-scales. Therefore, the original robust synthesis task reduces to solving the following optimization problem:
\begin{equation} 
\label{eq:opt_up}
    \inf_{K \in \mathcal{K}, D \in \mathbf{D}}  \| D \, \mathcal{F}_l(G,K) \, D^{-1} \|_\infty.
\end{equation}
In classical \(\mu\)-synthesis, the optimization problem \eqref{eq:opt_up} is tackled via the \(DK\)-iteration approach, which alternates between optimizing \(D\) while fixing \(K\) as constant, and then optimizing \(K\) with \(D\) held constant. During the \(K\)-step, \(K\) is determined through an \( \mathcal{H}_{\infty} \) synthesis procedure applied to the scaled plant outlined in \eqref{eq:opt_up}. During the \(D\)-step, a realizable \(D\)-scaling is obtained by optimizing \( D \) over a discrete frequency grid and subsequently fitting a transfer function $D \in \mathcal{RH}_{\infty}$  with $D^{-1} \in \mathcal{RH}_{\infty}$. While this heuristic technique can locate effective solutions in many practical situations, the coordinate descent nature can potentially lead to unnecessary conservatism.

\begin{figure}[t!]
\centering
\begin{minipage}{0.3\textwidth}
\centering
\scalebox{0.8}{
\begin{picture}(172,129)(23,-15)
 \thicklines
 \put(75,25){\framebox(40,40){$G$}}
 \put(163,42){$d$}
 \put(155,45){\vector(-1,0){40}}  
 \put(23,42){$e$}
 \put(75,45){\vector(-1,0){40}}  
 \put(80,75){\framebox(30,30){$\Delta$}}
 \put(42,70){$v$}
 \put(55,55){\line(1,0){20}}  
 \put(55,55){\line(0,1){35}}  
 \put(55,90){\vector(1,0){25}}  
 \put(143,70){$w$}
 \put(135,90){\line(-1,0){25}}  
 \put(135,55){\line(0,1){35}}  
 \put(135,55){\vector(-1,0){20}}  
 \put(80,-15){\framebox(30,30){$K$}}
 \put(42,18){$y$}
 \put(55,35){\line(1,0){20}}  
 \put(55,0){\line(0,1){35}}  
 \put(55,0){\vector(1,0){25}}  
 \put(143,18){$u$}
 \put(135,0){\line(-1,0){25}}  
 \put(135,0){\line(0,1){35}}  
 \put(135,35){\vector(-1,0){20}}  
\end{picture}
}

\end{minipage}
\hfill
\begin{minipage}{0.3\textwidth}
\centering
\scalebox{0.8}{
\begin{picture}(172,129)(23,-7)
 \thicklines
 \put(75,25){\framebox(40,50){$G$}}
 \put(30,54){\framebox(25,20){$D$}}
  \put(135,54){\framebox(25,20){$D^{-1}$}}
 \put(193,42){$d$}
 \put(185,45){\vector(-1,0){70}}  
 \put(-7,42){$e$}
 \put(75,45){\vector(-1,0){70}}  
 \put(75,63){\vector(-1,0){20}} 
  \put(-7,63){$\tilde{v}$}
  \put(193,63){$\tilde{w}$}
   \put(122,69){$w$}
     \put(62,69){$v$}
  \put(30,63){\vector(-1,0){25}} 
 \put(135,63){\vector(-1,0){20}}  
  \put(185,63){\vector(-1,0){25}} 
 \put(80,-15){\framebox(30,30){$K$}}
 \put(42,18){$y$}
 \put(55,35){\line(1,0){20}}  
 \put(55,0){\line(0,1){35}}  
 \put(55,0){\vector(1,0){25}}  
 \put(143,18){$u$}
 \put(135,0){\line(-1,0){25}}  
 \put(135,0){\line(0,1){35}}  
 \put(135,35){\vector(-1,0){20}}  
\end{picture}
}
\label{fig:DK}
\end{minipage}
\hfill
\begin{minipage}{0.3\textwidth}
\centering
\scalebox{0.8}{
\begin{picture}(172,129)(23,-4)
 \thicklines
 \put(78,45){\framebox(45,45){$G_c$}} 
 \put(165,82){$\bmat{\tilde{w}\\d}$}
 \put(160,82){\vector(-1,0){37}}  
 \put(18,82){$\bmat{\tilde{v}\\e}$}
 \put(78,82){\vector(-1,0){37}}  
 \put(78,-12){\framebox(45,45){$K_c$}} 
 \put(35,40){$y_c$}
 \put(53,53){\line(1,0){25}}  
 \put(53,25){\line(0,1){28}}  
 \put(53,25){\vector(1,0){25}}  
 \put(157,40){$u_c$}
 \put(148,25){\line(-1,0){25}}  
 \put(148,25){\line(0,1){28}}  
 \put(148,53){\vector(-1,0){25}}  
\end{picture}

}
\label{fig:D_scale}

\end{minipage}
\caption{Interconnection for Robust Synthesis}
\label{fig:musyn}
\end{figure}
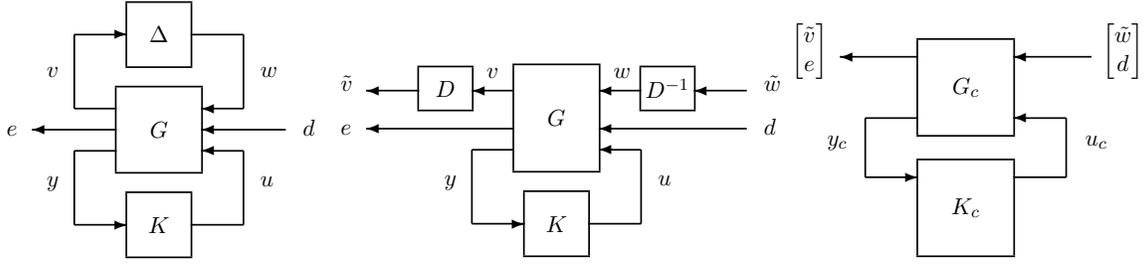

\paragraph{Policy optimization formulation.}
In contrast to the \(DK\)-iteration approach,~\cite{apkarian2011nonsmooth} proposed an algorithm to solve for \(D\) and \(K\) simultaneously by constructing a feedback interconnection of an augmented plant and a structured controller, as shown in the right-side sub-figure of Figure~\ref{fig:musyn}. This innovative approach transforms the robust synthesis problem \eqref{eq:opt_up} into a structured \(\mathcal{H}_{\infty}\) synthesis problem. In this framework, the augmented controller \(K_c\) and the augmented plant  \( G_c \) are formulated~as
\begin{align*}
    G_c = \left[ \begin{array}{cc}
I & 0 \\
\hline
0 & I \\ 0 & 0\\I & 0
\end{array} \right] G \left[ \begin{array}{c|ccc}
I & 0 & -I & 0 \\
0 & I & 0 & 0
\end{array} \right] + \left[ \begin{array}{c|ccc}
0 & 0 & 0 & I \\
\hline
0 & 0 & 0 & 0\\I & 0 & -I & 0 \\0 & 0 & 0 & 0
\end{array} \right],\,     K_c = \begin{bmatrix}
        K & 0 & 0 \\ 0 & \tilde{D} & 0 \\ 0 & 0 & \tilde{D}
    \end{bmatrix},
\end{align*}
where \(\tilde{D}\) is defined as \(\tilde{D} := \bsmtx D & 0 \\ 0 & I \esmtx - I\). Specifically, we can reformulate \eqref{eq:opt_up} as the following policy optimization problem
\begin{equation} 
\label{eq:muopt}
 \min_{K_c \in \mathcal{K}_c}  J(K_c),
\end{equation}
where the decision variable $K_c$ is determined by the structural controller parameterization, the cost function $J(K_c)$ is the closed-loop $\mathcal{H}_\infty$ norm (from $\bsmtx \tilde{w} \\d \esmtx$ to $\bsmtx \tilde{v} \\ e \esmtx$) for a given controller, and the feasible set $\mathcal{K}_c:=\{K_c: \,F_l(G_c,K_c) \,\mbox{is internally stable}\}$ just carries the information of the closed-loop stability constraint. The above PO formulation allows very flexible choices in the controller parameterization. For example, in the static output feedback setting and static \(D\) scaling, the augmented controller is just parameterized by a static matrix $K_c$. In the dynamic output feedback setting and LTI \(D\) scaling, the augmented controller has the following state-space form: 
\begin{align}
\begin{split}
\label{eq:Gmodel}
    \xi_{t+1}&=\bmat{A_K & 0 & 0 \\ 0 & A_D & 0 \\ 0 & 0 & A_D} \xi_t+\bmat{B_K & 0 & 0\\0 & \tilde{B}_D & 0\\ 0 &0 & \tilde{B}_D} {y_c}_t,\\
    {u_c}_t&=\bmat{C_K & 0 & 0 \\ 0 & \tilde{C}_D & 0 \\0 & 0 & \tilde{C}_D} \xi_t + \bmat{D_K & 0 & 0 \\ 0 & \tilde{D}_D & 0 \\ 0 & 0 & \tilde{D}_D}  {y_c}_t,\\
    \end{split}
\end{align}
where $\xi_t$ is the internal state for the augmented controller, \(\tilde{B}_D = \bsmtx B_D & 0 \esmtx\), \(\tilde{C}_D = \bsmtx C_D \\ 0 \esmtx\), \(\tilde{D}_D = \bsmtx D_D-I & 0 \\0 & 0 \esmtx\), and the decision variable $K_c$ is just the tuple \((A_K,B_K,C_K,D_K,A_D,B_D,C_D,D_D)\). In general, the formulation \eqref{eq:muopt} provides a unified paradigm for robust control synthesis \eqref{eq:opt_up} by allowing flexible choices of the augmented controller parameterization. When the system model is known, subgradient information can be efficiently calculated for solving \eqref{eq:muopt}. \citet{apkarian2011nonsmooth} takes such a nonsmooth optimization approach, leading to state-of-the-art numerical results on many examples. 

\paragraph{Problem statement: model-free policy search.} In this paper, we focus on solving \eqref{eq:muopt} under the model-free setting, i.e. the plant $G$ is unknown. 
We do not even assume any prior knowledge on the order of $G$, i.e. the state dimension $n_x$ can be unknown\footnote{It is well-known that the lack of order information can cause difficulty for system identification in some cases.}. We are particularly interested in model-free direct policy search which updates recursively as follows
\begin{align}
K_c^{n+1}=K_c^n-\alpha^n F^n,
\end{align}
where $F^n$ is a descent direction generated from some simulated data of the closed-loop system $F_l(G_c,K_c^n)$. 
If the cost $J$ is smooth around $K_c^n$, then obviously we can set $F^n$ to be some sample-based estimation of the gradient $\nabla J(K_c^n)$. However, the closed-loop $\mathcal{H}_\infty$ norm is typically a nonconvex nonsmooth function of $K_c$, and can be non-differentiable over important feasible points (e.g., stationary points) in the policy space \citep{apkarian2006controller,apkarian2006nonsmooth,arzelier2011h2,gumussoy2009multiobjective,burke2020gradient,curtis2017bfgs}. Advanced subgradient-based optimization techniques are typically needed for solving such nonconvex nonsmooth PO problems. We will study how to compute $F^n$ in the model-free setting.

\paragraph{Review: Subgradient methods in the model-based setting.}
For readability, let us briefly review  several subgradient-based methods that have been used in the model-based setting.
It is known than the closed-loop \(\mathcal{H}_{\infty}\) objective function \eqref{eq:opt_up} is locally Lipschitz\footnote{A function $J:\mathcal{K}_c \rightarrow \R $ is said to be locally Lipschitz if for any bounded set $S \subset \mathcal{K}_c$, there exists a constant $L$ such that $|J(K_c) - J(K'_c)| \leq L \|K_c-K'_c\|_2$ for all $K_c,K'_c \in S$.} over the feasible set \(\mathcal{K}_c\) \citep{apkarian2006nonsmooth}. For a locally Lipschitz function, the Clarke subdifferential exists and is defined~as
\begin{align}
    \partial J(K_c):= \text{conv} \{\lim_{i \rightarrow \infty} \nabla J({K_c}_i): {K_c}_i \rightarrow K_c, {K_c}_i \in \text{dom}(\nabla J)\subset \mathcal{K}_c \},
\end{align}
where $\text{conv}$ stands for the convex hull.  Dealing with the nonconvex nonsmooth optimization in \cite{apkarian2011nonsmooth} is highly non-trivial, i.e.  one needs to enlarge the Clarke subdifferential in some novel way for the purpose of generating good descent directions. The original work in \cite{apkarian2011nonsmooth} relies on a frequency-domain technique (which is quite similar to the \texttt{Hinfstruct} solver \citep{gahinet2011structured} from the MATLAB robust control toolbox). 
Alternatively, gradient sampling methods can also generate good descent directions \citep{burke2020gradient,burke2005robust,kiwiel2007convergence}. 
Specifically, notice that the 
Goldstein $\delta$-subdifferential for a point $K_c\in \mathcal{K}_c$ is defined as 
    \begin{align}\label{goldstein_sub}
        \partial_{\delta}J(K_c):= \text{conv}\{ \cup_{K_c' \in \mathcal{B}_{\delta}(K_c)} \partial J(K'_c) \},
    \end{align}
    where $\mathcal{B}_{\delta}(K_c)$ denotes the $\delta$-ball around $K_c$, and is implicitly required to be in $\mathcal{K}_c$. Clearly, $\partial_{\delta} J(K_c)$ is much larger than $\partial J(K_c)$. The minimum norm element of the Goldstein subdifferential provides a good descent direction, i.e. we have $J(K_c-\delta F/\|F\|_2) \leq J(K_c) - \delta \|F\|_2$ for $F$ being the minimum norm element of $\partial_\delta J(K_c)$ \citep{goldstein1977optimization}. Computing the minimum norm element from the Goldstein subdifferential can be difficult, and the main idea of gradient sampling method is to estimate a good descent direction from approximating $\partial_\delta J(K_c)$ as the convex hull of randomly sampled gradients over $\mathcal{B}_\delta(K_c)$ (this is reasonable due to the fact that a locally Lipschitz function is differentiable almost everywhere). At every iteration step $n$, one can randomly sample differential points around $K_c^n$, and use the convex hull formed by the gradients at those sampled points to approximate the Goldstein subdifferential. Then the minimum norm element from this convex hull of sampled gradients can be efficiently computed via a convex quadratic program and serves as a good descent direction for the policy update.

\begin{algorithm}[t!]
\caption{Non-derivative Sampling (NS)}
\label{alo:NS}
\begin{algorithmic}
\STATE{\bfseries Require:} initial stabilizing policy $K_c^0 \in \mathcal{K}_c$, initial sampling radius $\delta^0$,  optimality tolerances $\delta_{opt}, \epsilon_{opt} > 0$, initial stationarity target $\epsilon^0 \in [0,\infty)$, reduction factors $\mu_{\delta}, \mu_{\epsilon} \in (0,1]$, problem dimension $d$, line search parameters $(\beta,\underline{t},\kappa)$ in  $(0,1)$, and a sequence of positive mollifier parameters defined as $\alpha^n = \alpha_0/(n+1)$. 
\FOR{$n=0,1,2,\cdots$}  
\STATE Independently sample $\{ K_c^{n,i} \}_{i=1}^{d+1}$ uniformly from $\mathcal{B}_{\delta^n}(K_c^n)$
\STATE Independently sample $\{ z^{n,i}\}_{i=1}^{d+1}$ uniformly from $\mathcal{Z}:= \prod_{i=1}^d [-1/2, 1/2]^d.$
\STATE Compute $F^n = \arg \min \frac{1}{2} \|F\|_2^2 \, \text{s.t.}\, F \in \text{conv}\{ \chi(K_c^{n,1},\alpha^n,z^{n,1}), \cdots, \chi(K_c^{n,m},\alpha^n,z^{n,m}) \}.$
\STATE \textbf{if} {$\|F^n\| \le \epsilon_{opt}$ and $\delta^n \le \delta_{opt}$}, terminate.
\IF{$\| F^n \| \le \epsilon^n$}
\STATE set $\epsilon^{n+1}  \leftarrow \mu_{\epsilon} \epsilon^n$, $\delta^{n+1}  \leftarrow \mu_{\delta} \delta^n$, $t^n \leftarrow 0$, $K_c^{n+1}\leftarrow K_c^n$, and move to the next round
\ELSE 
\STATE set $\delta^{n+1} \leftarrow \delta^n$, $\epsilon^{n+1}  \leftarrow \epsilon^n$, $\hat{F}^n \leftarrow F^n / \| F^n\|_2$, and $K_c^{n+1}\leftarrow K_c^n-t^n \hat{F}^n$, where $t^n$ is determined using the following line search strategy:
\STATE \qquad (i) Choose an initial step size $t = t_{ini}^n  = \delta^n\ge t^n_{\min} :=  \min\{\underline{t}, \kappa \delta^n/3\}$ \\
\STATE \qquad (ii) If $J(K_c^n - t \hat{F}^n  ) \le J(K_c^n) - \beta t \| F^n \|$, return $t^n := t$  \\
\STATE \qquad (iii) If $\kappa t < t^n_{\min}$,  return $t^n := 0$ \\

\STATE \qquad (iv) Set $t:= \kappa t$, and go to (ii). 
\ENDIF
\ENDFOR
\end{algorithmic}
\end{algorithm}

\section{Main Results: Model-Free Algorithms and Theoretical Justifications}
In this paper, we aim to minimize the cost function \eqref{eq:muopt} within the policy space directly utilizing two distinct zeroth-order optimization methods: the non-derivative sampling (NS) and the standard zeroth-order policy search with randomized smoothing. In addition, we assume that the state, input, and output matrices specified in \eqref{eq_system model} are unknown, and the cost function \eqref{eq:muopt} can only be inferred from the input/output data via a ``black-box" simulator of the underlying system. In particular, we employ the model-free time-reversal power-iteration-based $\mathcal{H}_{\infty}$ estimation methods proposed in~\cite{Wahlberg2010} for estimating the cost function value \eqref{eq:muopt}. Furthermore, we demonstrate theoretically that under some assumptions, the PO problem \eqref{eq:muopt} can be rewritten as another PO problem with coercive cost function. Consequently, leveraging the techniques described in~\citet{guo2022global} and \citet{guo2023complexity}, we can obtain some theoretical justifications for  convergence to stationary points.

\subsection{Non-derivative Sampling}
As mentioned previously, gradient sampling (GS) is a principal optimization algorithm utilized in the HIFOO robust control package \citep{arzelier2011h2,gumussoy2009multiobjective}. In this work, given that the system model is unknown and we have access only to estimates of the cost function \eqref{eq:opt_up}, we adopt the NS algorithm \citep{kiwiel2010nonderivative}, a derivative-free counterpart to the GS algorithm. In contrast to GS, the NS estimates the gradient from function values via Gupal’s estimation $\chi(K,\alpha, z)$ (See \cite[Section 2]{kiwiel2010nonderivative} for more details on the computation of $\chi$). The NS method can be implemented as outlined in Algorithm \ref{alo:NS}.

In the model-free setting, one has to estimate the cost function from data. As discussed in the beginning of this section, we will  use well-established $\mathcal{H}_\infty$-norm estimation methods such as the power iteration method in~\citet{Wahlberg2010} to estimate the cost values.

\subsection{Derivative-free Optimization with Randomized Smoothing }
Our second derivative-free method is based on the utilization of randomized smoothing techniques, which have been widely adopted in both convex and nonconvex optimization challenges \citep{Duchi2012,Ghadimi2013}. We define the uniformly randomized smoothed counterpart of $J(K_c)$ as below.
\begin{definition} \label{uniform_sample}
Given a function $J$ that is $L$-Lipschitz (which may be nonconvex or nonsmooth) and a uniform distribution $\mathbb{P}$ over the set $\{ U: \| U\|_F = 1 \}$, the uniformly smoothed form of $J$, denoted as $J_\delta$, is given by 
\begin{equation} \label{eq:J_delta}
    J_\delta(K_c) = \mathbb{E}_{U\sim \mathbb{P}}[J(K_c+\delta U)].
\end{equation}
\end{definition}
This definition requires that both \(K_c\) and the perturbed \(K_c + \delta U\) remain within the feasible set \(\mathcal{K}_c\), for every \(U\) drawn from the distribution \(\mathbb{P}\). Recent insights from ~\cite{Lin2022} illustrate a key relationship between the Goldstein subdifferential and uniform smoothing, highlighting that \(\nabla J_\delta(K_c) \) is an element of \(\partial_\delta J(K_c)\). Under the definition of the Goldstein \(\delta\)-subdifferential \eqref{goldstein_sub}, a point \(K_c\) is a \((\delta,\epsilon)\)-stationary point if \(\dist(0, \partial_\delta J(K_c)) \le \epsilon\). Therefore, an $\epsilon$-stationary point of \(J_\delta(K_c)\) is also \((\delta,\epsilon)\)-stationary for the original function \(J(K_c)\). One nature idea for obtaining a $(\delta, \epsilon)$-stationary point of $J(K_c)$ is to perform
\begin{align} \label{warmup_alo}
K_c^{n+1} = K_c^n - \eta \nabla J_\delta (K_c^n)
\end{align}
provided that \(\nabla J_\delta(K_c)\) is accessible. However, \(\nabla J_\delta(K_c)\) is hard to compute in general. In addition, we focus on a model-free setting where we only have the access to the estimated  cost values. Building upon the insight in \eqref{warmup_alo}, we compute an estimate of the gradient \(\nabla J_\delta (K_c)\) using a zeroth-order oracle as outlined in Algorithm \ref{alg:DF_PO}.

The initialization of both Algorithm \ref{alo:NS} and Algorithm \ref{alg:DF_PO} involves establishing a feasible starting point \(K_c^0\). A typical initialization sets the \(D\) scale operator via choosing \(A_D\), \(B_D\), \(C_D\) matrices as zero, and making \(D_D\) the identity matrix. For the controller \(K\), we adopt initialization strategies such as PO-annealing methods, as suggested in \cite{ozaslan2022computing} and \cite{perdomo2021stabilizing}. Furthermore, to ensure the iterates $K_c^n$ and their perturbations remain within the feasible region \(\mathcal{K}_c\), one can choose small ($\delta^0$, $\alpha^0$) for Algorithm \ref{alo:NS} and small (\(\delta\), $\eta$) for Algorithm \ref{alg:DF_PO}.

 \begin{algorithm}[t!]
   \caption{Derivative-free optimization method with randomized smoothing}
   \label{alg:DF_PO}
\begin{algorithmic}
    \STATE {\bfseries Require:} feasible initial point $K_c^0$, stepsize $\eta > 0$, problem dimension $d \ge 1$, smoothing parameter $\delta$ and iteration number $N \ge 1$.
   \FOR{$n = 0, 1, \cdots, N-1$ }

   \STATE Sample $W^n\in \mathbb{R}^{n_{d}}$ uniformly at random over vectors such that $\| W\|_F = 1$.
   \STATE Compute $g^n = \frac{d}{2\delta}(J(K_c^n+\delta W^n)-J(K_c^n-\delta W^n))W^n$.
   \STATE Update $K_c^{n+1} = K_c^n - \eta g^n$.  
   \ENDFOR
   \STATE {\bfseries Output:} $K_c^R$ where $R\in \{ 0, 1,2,\cdots,N-1 \}$ is uniformly sampled.

\end{algorithmic}
\end{algorithm} 

\subsection{Theoretical Justifications}
As previously discussed in Section~\ref{sec:2}, the robust synthesis problem  \eqref{eq:opt_up} has an equivalent policy optimization formulation \eqref{eq:muopt}. 
In this context, \(K_c\) denotes the augmented controllers, integrating both the controller \(K\) and the \(D\) scale parameters.

If one can prove that the cost function in \eqref{eq:muopt} is coercive,  then the existing proof arguments in~\citet{guo2022global} and \citet{guo2023complexity} can be directly applied to provide theoretical justifications on convergence to stationary points. 
 However, as highlighted in \cite{Bompart2007}, the cost function \(J(K_c)\) might remain finite at the boundary of \(\mathcal{K}_c\), indicating situations where the system is not internally stable yet exhibits a finite cost \(J(K_c)\). 
The absence of coerciveness in the cost function poses a significant challenge for establishing the convergence behavior of model-free $\mu$-synthesis.
To address this, we draw inspiration from \cite{Bompart2007} and consider the closed-loop transfer function:

\begin{align}
    \mathcal{T}_{stab}(K_c,\mathsf{z}):= (\mathsf{z}I-A_{cl}(K_c))^{-1},
\end{align}
where \(A_{cl}(K_c)\) represents the closed-loop state matrix for \(\mathcal{F}_l(G_c,K_c)\). We assume that we have access to the following zeroth-order oracle that leads to a ``regularized" optimization problem:
\begin{align} \label{eq:opt_new}
    \min_{K_c \in \mathcal{K}_c } J_c(K_c) := \max\{ J(K_c), \lambda \| \mathcal{T}_{stab}(K_c,\mathsf{z}) \|_{\infty}  \},
\end{align}
where \(\lambda\) is a small positive parameter to be tuned. As commented in \cite{Bompart2007}, this adjustment addresses the system's internal stability concerns, while 
the modified cost \(J_c(K_c)\) is identical to  \(J(K_c)\) for any $K_c$ satisfying $J(K_c)\ge \lambda \| \mathcal{T}_{stab}(K_c,\mathsf{z}) \|_{\infty}$ (most $K_c$ satisfies this if $\lambda$ is sufficiently small).
Therefore, minimizing \eqref{eq:opt_new} with a sufficiently small \(\lambda\) effectively parallels the original policy optimization problem \eqref{eq:muopt}, specifically for interior points far away from the boundary of \( \mathcal{K}_c\). 
Now, we can establish the coerciveness of the \(J_c(K_c)\) in the following lemma\footnote{The coerciveness property does not come for free. The price is that the modification in the cost could potentially lead to new stationary points. For simplicity, this lemma fixes \(D_D\) as a constant matrix. We will relax this in the appendix.}

\begin{lemma}\label{lem1}
Suppose \(B := \left[ \begin{array}{cc|c} B_w & B_d & B_u 

\end{array} \right] \) and 
\(C := \left[ \begin{array}{cc|c} C_v^\top & C_e^\top & C_y^\top 

\end{array} \right]^\top \)  are full rank matrices. Then the objective function \(J_c(K_c)\) defined by \eqref{eq:opt_new} is coercive over the set \(\mathcal{K}_c\) in the sense that for any sequence \(\{K_c^l\}_{l=1}^\infty \subset \mathcal{K}_c\) we have
\begin{equation*}
    J_c(K_c^l) \rightarrow +\infty
\end{equation*}
if either \(\|K_c^l\|_F \rightarrow +\infty\), or \(K_c^l\) converges to an element in the boundary \(\partial \mathcal{K}_c\).
\end{lemma}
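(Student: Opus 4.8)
The plan is to exploit the regularization term and show that the whole cost blows up through it. Since \(J_c(K_c)\ge \lambda\norm{\mathcal{T}_{stab}(K_c,\mathsf{z})}_\infty\) by definition, it suffices to prove \(\norm{\mathcal{T}_{stab}(K_c^l,\mathsf{z})}_\infty \to +\infty\) in both of the stated regimes. This is, in fact, exactly why the stabilizing resolvent was added to the cost: the original \(J\) only sees \(A_{cl}\) through the input/output matrices and is not coercive (non-minimal realizations and similarity transforms allow \(\norm{K_c}_F\to\infty\) at constant \(J\)), whereas \(\mathcal{T}_{stab}=(\mathsf{z}I-A_{cl}(K_c))^{-1}\) exposes the entire closed-loop state matrix.

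Two estimates form the backbone. First, I would record that the augmented closed-loop state matrix is an affine function of the controller parameters, \(A_{cl}(K_c)=\mathcal{A}+\mathcal{B}\hat K_c\mathcal{C}\), where \(\hat K_c\) packs \((A_K,B_K,C_K,D_K)\) together with the (fixed-\(D_D\)) scale parameters, and \(\mathcal{A},\mathcal{B},\mathcal{C}\) are built from the data of \(G_c\). Writing out \(\mathcal{B}\) and \(\mathcal{C}\) block-wise, the full-rank hypotheses on \(B\) and \(C\) translate into \(\mathcal{B}\) having full column rank and \(\mathcal{C}\) full row rank (the \(D\)-scale channels route through \(B_w,B_d,C_v,C_e\), which is precisely why the hypotheses are stated for the full stacked matrices rather than \(B_u,C_y\) alone). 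Consequently the linear map \(\hat K_c\mapsto\mathcal{B}\hat K_c\mathcal{C}\) is injective, hence bounded below, so \(\norm{A_{cl}(K_c)}_F\ge c\norm{K_c}_F-c'\) for constants \(c>0,c'\). Second, I would lower bound the resolvent norm by a Markov parameter: from \(\mathcal{T}_{stab}(\mathsf{z})=\sum_{k\ge1}A_{cl}^{k-1}\mathsf{z}^{-k}\), each impulse-response coefficient obeys \(\norm{h_k}_2\le\norm{\mathcal{T}_{stab}}_\infty\), and taking \(k=2\) gives \(\norm{\mathcal{T}_{stab}(K_c,\mathsf{z})}_\infty\ge\norm{A_{cl}(K_c)}_2\ge\norm{A_{cl}(K_c)}_F/\sqrt{n_{cl}}\), where \(n_{cl}\) is the closed-loop dimension.

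Combining the two estimates disposes of the first regime: if \(\norm{K_c^l}_F\to\infty\) then \(\norm{A_{cl}(K_c^l)}_F\to\infty\), hence \(\norm{\mathcal{T}_{stab}}_\infty\to\infty\). For the boundary regime, I would use continuity: the affine map makes \(A_{cl}(K_c^l)\to A_{cl}(K_c^*)\) for the finite limit \(K_c^*\in\partial\mathcal{K}_c\), and since \(\mathcal{K}_c\) is the open stabilizing set, continuity of the spectral radius forces \(A_{cl}(K_c^*)\) to carry an eigenvalue \(\mu^*\) with \(|\mu^*|=1\). Then \(\sigma_{\min}(\mu^*I-A_{cl}(K_c^l))\to\sigma_{\min}(\mu^*I-A_{cl}(K_c^*))=0\), and because \(\mu^*\) lies on the evaluation contour we obtain \(\norm{\mathcal{T}_{stab}(K_c^l,\mathsf{z})}_\infty\ge 1/\sigma_{\min}(\mu^*I-A_{cl}(K_c^l))\to\infty\).

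I expect the main obstacle to be the first estimate rather than the resolvent bounds: one must carefully assemble the closed-loop realization of \(\mathcal{F}_l(G_c,K_c)\) (keeping \(D_D\) fixed and ensuring the \(v\)–\(w\) and scale interconnections are well posed so that \(A_{cl}\) is genuinely affine, e.g. assuming \(D_{yu}=0\)), and then verify that full rank of \(B\) and \(C\) is exactly the condition rendering the linear part \(\hat K_c\mapsto\mathcal{B}\hat K_c\mathcal{C}\) injective. The bookkeeping of how the augmented controller blocks and the \(D\)-scale parameters embed into \(\mathcal{B}\) and \(\mathcal{C}\) is where the argument is most delicate; everything downstream is soft, relying only on continuity, a Fourier-coefficient bound, and a singular-value estimate.
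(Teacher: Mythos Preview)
Your proposal is correct and matches the paper's proof essentially line for line: both reduce to showing \(\norm{\mathcal{T}_{stab}}_\infty\to\infty\), handle the boundary case by continuity of \(\sigma_{\min}(e^{j\omega_0}I-A_{cl}(\cdot))\) at a unit-circle eigenvalue, and handle the unbounded case via the affine representation \(A_{cl}=\tilde A+\tilde B\tilde K_c\tilde C\) combined with the full-rank hypotheses on \(B,C\). Your Markov-parameter bound \(\norm{\mathcal{T}_{stab}}_\infty\ge\norm{A_{cl}}_2\) is a clean restatement of the paper's time-domain impulse argument (they pick \(\mathbf{d}^l=\{d_0^l,0,\dots\}\) and read off \(\delta_2=A_{cl}d_0^l\), which is exactly the second Markov parameter evaluated in one direction), and you correctly flag the block bookkeeping of \(\mathcal{B},\mathcal{C}\) as the place where the rank hypotheses do the work---the paper carries this out explicitly by rewriting \(\tilde B\tilde K_c\tilde C=\bar B\bar K_c\bar C\) with \(\bar B=\bsmtx 0 & B\\ I & 0\esmtx\), \(\bar C=\bsmtx 0 & I\\ C & 0\esmtx\).
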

\begin{proof}
First noticing that it suffices to show that $\| \mathcal{T}_{stab}(K_c,\mathsf{z}) \|_{\infty}$ is coercive over the set $\mathcal{K}_c$. Suppose that there is a sequence $\{ K_c^l \}_{l=1}^\infty$ such that $K_c^l \to K_c^\dag \in \partial \mathcal{K}_c$. Clearly we have $\rho(A_{cl}(K_c^\dag))=1$, and there exists some $\omega_0$ such that the matrix $(e^{j\omega_0}I-A_{cl}(K_c^\dag) )$ becomes singular. Therefore, we have:
\begin{align*}
\| \mathcal{T}_{stab}(K_c^l, j \omega) \|_{\infty} &= \sup_{\omega\in[0, 2\pi]}\sigma_{\max}\left( (e^{j\omega}I-A_{cl}(K_c^l))^{-1} \right) \\
&\ge \sigma_{\max} \left( (e^{j\omega_0}I-A_{cl}(K_c^l) )^{-1} \right).
\end{align*}
Notice that  we have $\rho(A_{cl}(K_c^l)) < 1$ for each $l$ , and hence we have $\sigma_{\min} \left(e^{ j\omega_0}I-A_{cl}(K^l)\right)>0$, i.e. the smallest singular values of $(e^{ j\omega_0}I-A_{cl}(K_c^l))$ are positive for all $l$. By the continuity of $\sigma_{\min}(\cdot)$, we must have $\sigma_{\min} \left(e^{ j\omega_0}I-A_{cl}(K_c^l)\right) \to 0$ as $ K_c^l \to K_c^\dag \in \partial \mathcal{K}_c$. Hence we have $\sigma_{\max} \left(  (e^{ j\omega_0}I-A_{cl}(K_c^l))^{-1} \right) \to +\infty$ as $l \to \infty$. Then we have $\| \mathcal{T}_{stab}(K_c^l, z) \|_{\infty} \to +\infty$ as $K_c^l\to K_c^\dag\in \partial \mathcal{K}_c$. The proof of \(J_c(K_c^l) \rightarrow + \infty\) as \(\norm{K_c^l}_F \rightarrow + \infty\) will be given in the~appendix.

\end{proof}

Once the coerciveness is proved, we can slightly modify the existing proof techniques in~\citep{guo2022global,guo2023complexity} to show convergence guarantees to stationary points for model-free $\mu$-synthesis. We provide more discussions on this in the appendix. This part of extensions is actually quite straightforward, and hence omitted here.

\section{Numerical Experiments}
In this section, we present the numerical study to show the effectiveness of our proposed model-free methods across various examples.

\subsection{Doyle's Example}
We start with an illustrative example from \citep{doyle1985structured}, showing a scenario where the \(DK\)-iteration method has difficulties in converging to the optimal solution. We will show that Algorithm \ref{alg:DF_PO} successfully converges to the optimal solution for this example. Consider the following system:
\begin{align} \label{ex1}
    G = \begin{bmatrix} R & U \\ V & 0 \end{bmatrix}, \quad \text{where} \quad R = \begin{bmatrix} -1 & 1 \\ 0 & 1 \end{bmatrix},\,\, U = \begin{bmatrix} 0 \\ 1 \end{bmatrix},\,\, V = \begin{bmatrix} 1 & 0 \end{bmatrix}.
\end{align}
This system is coupled with a controller \(K=Q\in\mathbb{R}\) and an uncertainty set \(\mathbf{\Delta} = \{  \delta : \delta \in \mathbb{C} \}\). The upper bound of the \(\mu\)-synthesis is defined as the following problem (given the number of complex scalars being fewer than four, the upper bound is exactly the \(\mu\) value \citep{dullerud2013course}):
\begin{align} \label{ex1_up}
    \min_{Q \in \mathbb{R}} \min_{D \in \mathbf{D}} \sigma_{\text{max}} \left( D \mathcal{F}_l(G,K)D^{-1} \right)
\end{align}
where \(\mathcal{F}_l(G,K) = R+UQV\) and \(\mathbf{D} = \left\{ \bsmtx d & 0 \\ 0 & 1 \esmtx : d > 0 \right\}\). The optimization \eqref{ex1_up} can be rewritten as 
\begin{align} \label{ex1_up_last}
    \min_{Q \in \mathbb{R}, d > 0}  \sigma_{\text{max}} \left( \begin{bmatrix} -1 & d \\ Q/d & 1 \end{bmatrix} \right).
\end{align}
The optimal values for \(Q\) and \(d\) are \(Q^* = 0\) and \(d^* = 0\), yielding \(\mu^* = 1\). Employing the \(DK\)-iteration method, with \(Q\) fixed, the optimal \(d\) is \(d = \sqrt{Q}\), and with \(d\) fixed, the optimal \(Q\) is \(Q = d^2\). Thus iteratively, solving for either \(d\) or \(Q\) will immediately converge to \(Q=d^2\). Initializing with \(d^0 = 85\) and \(Q^0 = 72\), the iterates and contour lines generated by Algorithm \ref{alg:DF_PO} are drawn in Figure \ref{fig:Doyle}. The results clearly demonstrate the convergence of Algorithm \ref{alg:DF_PO} to the global minimum (\(Q^* = 0\) and \(d^* = 0\)), whereas the \(DK\)-iteration terminates at the suboptimal point \(Q = 8.485\) and \(d = 72\).
\begin{figure}[H]
    \centering
    \includegraphics[width=0.6\textwidth]{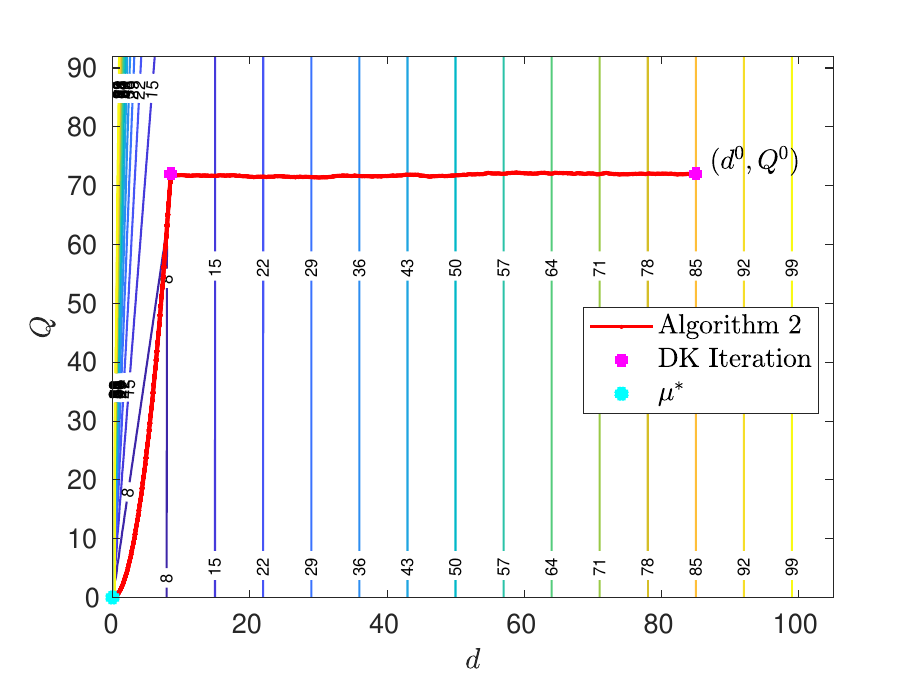}
    \caption{Algorithm \ref{alg:DF_PO} iterates in policy space for Doyle's example.}
    \label{fig:Doyle}
\end{figure}

\subsection{Higher Dimension Examples }
We now demonstrate the efficiency of our model-free methods on systems of higher dimensions. Specifically, Figure~\ref{fig:HigherDim} illustrates the relative error trajectories for systems with state dimension \(n_x = \{10, 20, 30\}\). The systems under consideration are structured according to Equation~\eqref{eq_system model}, with the matrices \((A, B_u, C_y)\) generated via the MATLAB function \texttt{drss}. The remaining matrices are realized by sampling from a standard normal distribution. Given that the \(A\) matrix is stable, the initial controller matrices \((A_K^0, B_K^0, C_K^0, D_K^0)\) are set to zero. Moreover, the \(D\)-scale operator matrices \((A_D^0, B_D^0, C_D^0)\) are initially set to zero, while the \(D_D^0\) matrix is initialized as the identity matrix. For all the experiments, \(D\)-scales of a state order of \(1\) is used. Table \ref{left_plot} presents a detailed results of our model-free methods compared to MATLAB's model-based \texttt{musyn} function. This shows that our model-free methods are not only comparable to, but in certain instances outperform \(DK\)-iteration.
\begin{table}[H]
  \caption{Comparison of our model-free methods with model-based method}
  \label{left_plot}
  \centering
  \begin{tabular}{lccc}
    \toprule
   $(n_x,n_w,n_u,n_d,n_v,n_e,n_y)$    & Algorithm 1 & Algorithm 2  & \texttt{musyn} \\ \midrule
    $(10,1,1,1,1,1,1)$   & 17.174 & 17.158 & 20.536    \\
    $(20,2,2,2,2,2,2)$   & 17.392 & 16.255& 18.681  \\
    $(30,3,3,3,3,3,3)$   & 38.417  & 37.051 & 32.501   \\
    \bottomrule
  \end{tabular}
\end{table}

\vspace{-0.1in}

\begin{figure}[h]
\minipage{0.5\textwidth}
   \centering
  \includegraphics[width=\linewidth]{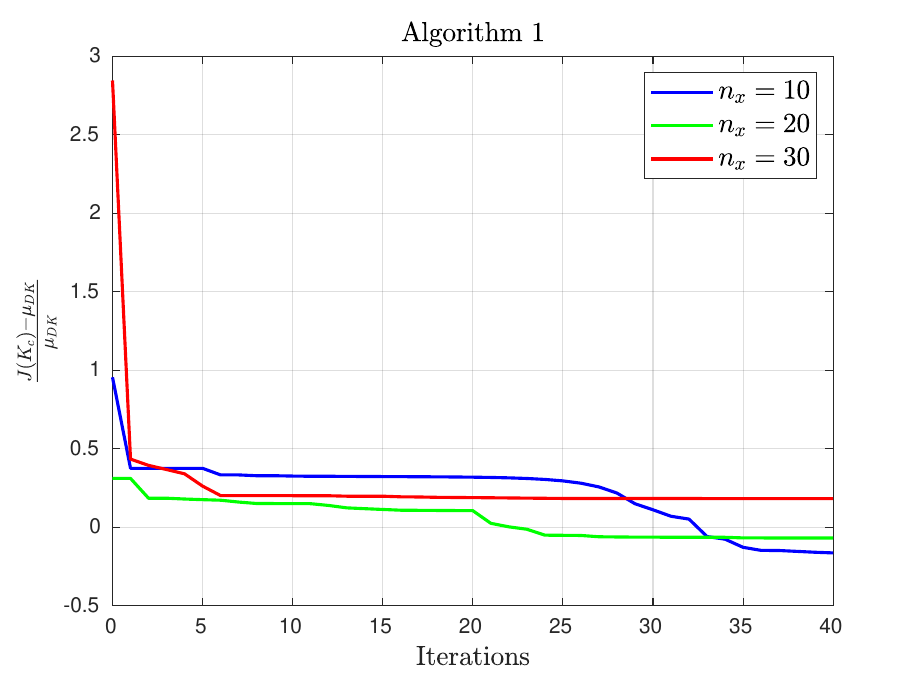}
\endminipage\hfill
\minipage{0.5\textwidth}%
   \centering
  \includegraphics[width=\linewidth]{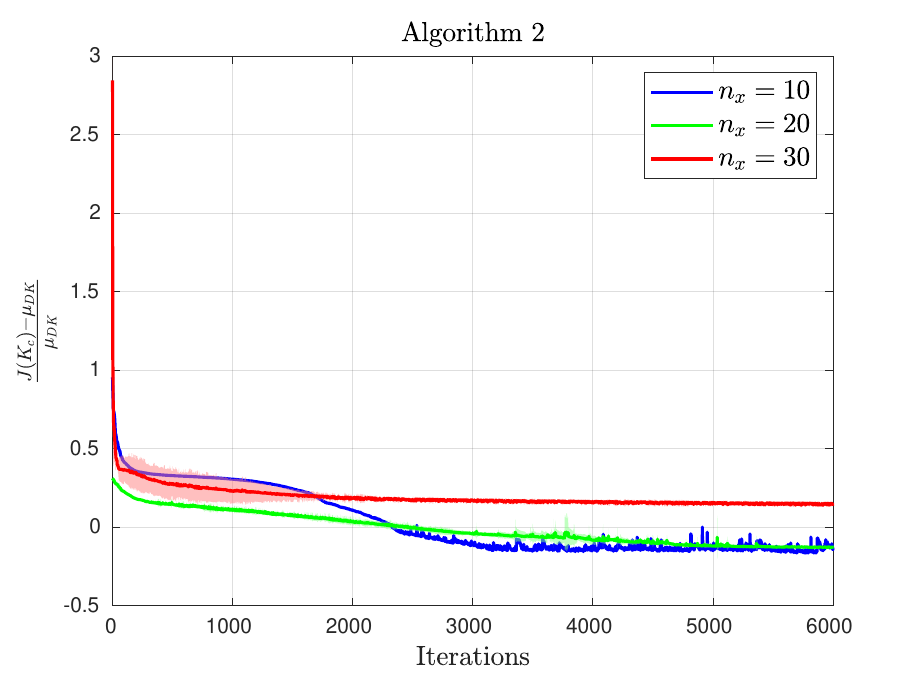}
\endminipage\hfill
\caption{Left: The plot illustrates the normalized deviation of trajectories from Algorithm~\ref{alo:NS} relative to the MATLAB \texttt{musyn} function outputs, denoted as \( \mu_{DK} \), across system states \( n_x = \{10, 20, 30\} \). Right: The plot illustrates the normalized deviation of trajectories from Algorithm~\ref{alg:DF_PO} relative to the MATLAB \texttt{musyn} function outputs, denoted as \( \mu_{DK} \), across system states \( n_x = \{10, 20, 30\} \). Solid lines depict the mean values, and the shaded regions represent the 98\% confidence intervals.}
\label{fig:HigherDim}
\end{figure}

\vspace{-0.4in}
\section{Conclusions}
This paper addresses model-free $\mu$-synthesis using nonsmooth optimization and direct policy search. 
We extend the nonsmooth optimization perspective in \citet{apkarian2011nonsmooth} to the general model-free output-feedback $\mu$-synthesis setting. 
Numerical study and theoretical justifications are both provided to demonstrate zeroth-order optimization as an efficient approach for model-free \(\mu\)-synthesis. 

\acks{The work of Darioush Kevian and Geir Dullerud is supported by NSF under Grant ECCS 1932735. The work of Xingang Guo and Bin Hu is generously supported by the NSF award CAREER-2048168. The work of Peter Seiler is supported by the U.S. Office of Naval Research (ONR) under Grant N00014-18-1-2209.}

\bibliography{main}

\clearpage

\section*{Appendix}
In the main paper, we presented a partial proof of Lemma \ref{lem1}. In the appendix, we will complete the proof for this lemma, and provide some extra discussions on our theoretical justifications. 

\paragraph{Completing the proof for Lemma \ref{lem1}:}
To complete the proof,  we need to show that \( J_c(K_c^l) \rightarrow +\infty \) as \( \|K_c^l\|_{F} \rightarrow +\infty \). Suppose we have a sequence \(\{K_c^l\}\) satisfying \( \|K_c^l\|_F\rightarrow +\infty \). It is known that for a given \(K_c^l\), \( \|\mathcal{T}_{\text{stab}}(K_c^l, z) \|_{\infty}^2 \) is equivalent to the following supremum expression in the time domain:
\begin{align}
    \sup_{\mathbf{d}^l:\|\mathbf{d}^l\| \le 1} \sum_{t=0}^{\infty} \delta_t^\top \delta_t
\end{align}
where \(\delta_t\) is governed by an LTI system as below:
\begin{align}
\label{eq:supeq}
    \delta_{t+1} = A_{cl}(K_c^l) \delta_t + d_t^l, \quad \delta_0 = 0
\end{align}
Here, \(\mathbf{d}^l := \{d_0^l, d_1^l, \cdots \}\) represents the disturbance sequence, which can be chosen adversarially. Now, define:

\begin{align}
    \tilde{A} := \begin{bmatrix}
        A & 0 \\ 0 & 0
    \end{bmatrix}, \quad 
    \tilde{B} := \begin{bmatrix}
        0 & \hat{B} \\ I & 0
    \end{bmatrix}, \quad 
    \tilde{C} := \begin{bmatrix}
        0 & I \\ \hat{C} & 0
    \end{bmatrix}
\end{align}
where \(\hat{B}\) and \(\hat{C}\) are defined as

\begin{align*}
    \hat{B} := \underbrace{\left[\begin{array}{cc|c}
        B_w & B_d & B_u \\
    \end{array}\right]}_{B} 
    \begin{bmatrix} 
        0 & -I & 0 \\ I & 0 & 0 
    \end{bmatrix}, \quad 
    \hat{C} := \begin{bmatrix}
        0 & I \\ 0 & 0 \\ I & 0
    \end{bmatrix} 
    \underbrace{\left[ \begin{array}{c}
        C_{v} \\ C_e \\ \hline C_y
    \end{array}\right]}_{C}.
\end{align*}
Additionally, using the definition of the augmented controller \(K_{c}^l\) in \eqref{eq:Gmodel} as below:

\begin{align}
\begin{split}
    \xi_{t+1} &= \underbrace{\begin{bmatrix}
        A_K^l & 0 & 0 \\ 
        0 & A_D^l & 0 \\ 
        0 & 0 & A_D^l
    \end{bmatrix}}_{A_c^l} \xi_t 
    + \underbrace{\begin{bmatrix}
        B_K^l & 0 & 0 \\
        0 & \tilde{B}_D^l & 0 \\
        0 & 0 & \tilde{B}_D^l
    \end{bmatrix}}_{B_c^l} {y_c}_t, \\
    {u_c}_t &= \underbrace{\begin{bmatrix}
        C_K^l & 0 & 0 \\ 
        0 & \tilde{C}_D^l & 0 \\
        0 & 0 & \tilde{C}_D^l
    \end{bmatrix}}_{C_c^l} \xi_t 
    + \underbrace{\begin{bmatrix}
        D_K^l & 0 & 0 \\ 
        0 & \tilde{D}_D^l & 0 \\ 
        0 & 0 & \tilde{D}_D^l
    \end{bmatrix}}_{D_c^l} {y_c}_t,
\end{split}
\end{align}
and defining \(\tilde{K}_c^l := \left[ \begin{array}{c|c}

    A_c^l & B_c^l \\ \hline  
    C_c^l & D_c^l

\end{array}\right] \), we can show that \(A_{cl}(K_c^l) = \tilde{A} + \tilde{B} \tilde{K}_c^l \tilde{C}\). It can show that \(\tilde{B}\tilde{K}_c^l \tilde{C}\) is equal to 
\begin{equation}
\begin{split}
    \tilde{B}\tilde{K}_c^l \tilde{C} &= \begin{bmatrix} 0 & B \\ I & 0 \end{bmatrix} \left[ \begin{array}{cc}
        A_c^l & B_c^l \begin{bmatrix} 0 & I \\ 0 & 0 \\ I & 0 \end{bmatrix} \\
        \begin{bmatrix} 0 & -I & 0 \\ I & 0 & 0 \end{bmatrix} C_c^l & \begin{bmatrix} 0 & -I & 0 \\ I & 0 & 0 \end{bmatrix} D_c^l \begin{bmatrix} 0 & I \\ 0 & 0 \\ I & 0 \end{bmatrix}
    \end{array} \right] \begin{bmatrix} 0 & I \\ C & 0 \end{bmatrix} \\
    &= \underbrace{\begin{bmatrix} 0 & B \\ I & 0 \end{bmatrix}}_{\bar{B}} \underbrace{ \left[ \begin{array}{cc}
    A_c^l & \left[ \begin{array}{cc} B_K^l & 0 \\ 0 & 0 \\ \begin{bmatrix}
        B_D^l & 0
    \end{bmatrix} & 0
    \end{array} \right]
     \\ \left[ \begin{array}{ccc} 0 & \begin{bmatrix}
        -C_D^l \\ 0
    \end{bmatrix} & 0 \\ C_K^l & 0 & 0
    \end{array} \right] & \begin{bmatrix}
        0 & 0 \\ 0 & D_K^l
    \end{bmatrix}\end{array}\right]}_{\bar{K}_c^l} \underbrace{\begin{bmatrix} 0 & I \\ C & 0 \end{bmatrix}}_{\bar{C}}
\end{split}
\end{equation}
We can see that \(\bar{K}_c^l\) consists all the elements of \(K_c^l\) except \(D_D\) and since we assume \(D_D\) is a constant fix matrix, so if \(\norm{K_c^l}_F \rightarrow +\infty\), then \(\norm{\bar{K}_c^l}_F \rightarrow + \infty\). Let \(\mathbf{d}^l = \{d_0^l, 0, 0, \cdots \}\) with \(\norm{d_0^l} = 1\) such that \(\sigma_{\max}(\tilde{B} \tilde{K}_c^l \tilde{C}) = \norm{\tilde{B} \tilde{K}_c^l \tilde{C} d_0^l}\). Then, we have:
\begin{align*}
\| \mathcal{T}_{\text{stab}}(K_c^l, j \omega) \|_{\infty}^2 
&= \sup_{\mathbf{d}^l:\|\mathbf{d}^l\|\le 1} \sum_{t=0}^\infty \delta_t^T \delta_t \\
&\ge \sup_{\mathbf{d}^l:\|\mathbf{d}^l\|\le 1}  {\delta_1}^T \delta_1 + {\delta_2}^T \delta_2 \\ 
&=_{(1)} {d_0^l}^T d_0^l + (A_{\text{cl}} {d_0^l})^T (A_{\text{cl}} {d_0^l}) \\
&\ge {d_0^l}^T A_{\text{cl}}^T A_{\text{cl}} {d_0^l} \\
&= {d_0^l}^T (\tilde{A} + \tilde{B} \tilde{K}_c^l \tilde{C})^T (\tilde{A} + \tilde{B} \tilde{K}_c^l \tilde{C}) {d_0^l} \\
&= {d_0^l}^T (\tilde{A}^T \tilde{A} + \tilde{A}^T \tilde{B} \tilde{K}_c^l \tilde{C} + (\tilde{B} \tilde{K}_c^l \tilde{C})^T \tilde{A} + \tilde{C}^T \tilde{K}_c^{l^T} \tilde{B}^T \tilde{B} \tilde{K}_c^l \tilde{C} ){d_0^l} \\
&\ge {d_0^l}^T (\tilde{C}^T \tilde{K}_c^{l^T} \tilde{B}^T \tilde{B} \tilde{K}_c^l \tilde{C} ){d_0^l} \\
&= \|\tilde{B} \tilde{K}_c^l \tilde{C} d_0^l\|^2 \\
&\ge \sigma_{\min}(\bar{B}) \sigma_{\min}(\bar{C}) \sigma_{\max}(\bar{K}_c^l)
\end{align*}
where the inequality \((1)\) holds since we plugged in specific \(\mathbf{d}^l\) in \eqref{eq:supeq}. Since  \(B\) and \(C\) are full rank matrices, then we have \(\sigma_{\min}(\bar{B}) > 0\) and \(\sigma_{\min}(\bar{C}) > 0\). Since \(\norm{K_c^l}_F \rightarrow + \infty\) as \(l \rightarrow + \infty\), we have \(\sigma_{\max}(\bar{K}_c^l) \rightarrow +\infty\). Therefore, \(\norm{\mathcal{T}_{\text{stab}}(K_c^l,j\omega)}_{\infty}^2 \rightarrow + \infty\) as \(\norm{K_c^l}_F \rightarrow + \infty\). This completes the proof. $\blacksquare$

\paragraph{Relaxing assumptions.}
Also, it is worth mentioning that we can relax the assumptions of Lemma~\ref{lem1} and further remove the restictions on fixing $D_D$ via using the following regularized cost function:

\begin{align}
    \label{eq:new_cost}
\min_{K_c \in \mathcal{K}_c} J_c(K_c) :=\max\{J(K_c), \lambda_1 \| \mathbf{T}_{stab}(K_c,\mathsf{z})\|_{\infty}, \lambda_2 \| K_c \|_F \}
\end{align}
where \(\lambda_1\) and \(\lambda_2\) are small positive parameters to be tuned. We have the following corollary.
\begin{lemma}
    The objective function \(J_c(K_c)\) defined by \eqref{eq:new_cost} is coercive over the set \(\mathcal{K}_c\).
\end{lemma}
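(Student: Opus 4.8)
The plan is to exploit the \(\max\) structure of \(J_c\) and handle the two coerciveness conditions separately. By construction, \(J_c(K_c) \ge \lambda_2 \norm{K_c}_F\) and \(J_c(K_c) \ge \lambda_1 \norm{\mathbf{T}_{stab}(K_c,\mathsf{z})}_\infty\) hold for every \(K_c \in \mathcal{K}_c\), so it suffices to lower-bound \(J_c\) by whichever of these two terms diverges in each regime.

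First I would treat the unbounded case \(\norm{K_c^l}_F \to +\infty\). Here the new regularizer does all the work: since \(J_c(K_c^l) \ge \lambda_2 \norm{K_c^l}_F\) with \(\lambda_2 > 0\), we immediately obtain \(J_c(K_c^l) \to +\infty\). This is precisely the step that previously forced the full-rank conditions on \(B\) and \(C\) together with the fixed-\(D_D\) restriction in Lemma~\ref{lem1}: in that earlier argument \(\norm{K_c^l}_F \to +\infty\) had to be transferred to \(\sigma_{\max}(\bar{K}_c^l) \to +\infty\) (which needs fixed \(D_D\)) and then amplified through \(\sigma_{\min}(\bar B), \sigma_{\min}(\bar C) > 0\) (which needs the full-rank hypotheses). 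The explicit \(\lambda_2 \norm{K_c}_F\) term now makes this case trivial and lets us discard those assumptions entirely.

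Next I would treat the boundary case \(K_c^l \to K_c^\dag \in \partial \mathcal{K}_c\), which reduces verbatim to the first part of the proof of Lemma~\ref{lem1}. Since \(K_c^\dag\) lies on the stability boundary, \(\rho(A_{cl}(K_c^\dag)) = 1\), so there is a frequency \(\omega_0\) at which \(e^{j\omega_0}I - A_{cl}(K_c^\dag)\) is singular. Using \(\rho(A_{cl}(K_c^l)) < 1\) for each interior iterate together with the continuity of \(\sigma_{\min}(\cdot)\), one gets \(\sigma_{\min}(e^{j\omega_0}I - A_{cl}(K_c^l)) \to 0\), hence \(\norm{\mathbf{T}_{stab}(K_c^l,\mathsf{z})}_\infty \to +\infty\) and therefore \(J_c(K_c^l) \ge \lambda_1 \norm{\mathbf{T}_{stab}(K_c^l,\mathsf{z})}_\infty \to +\infty\).

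There is no genuine obstacle here, since both cases are now elementary. The only point deserving care is verifying that the dropped assumptions entered Lemma~\ref{lem1} \emph{solely} through the unbounded case: the boundary argument invokes only continuity of \(\sigma_{\min}\) and interior stability, neither of which uses the full-rank or fixed-\(D_D\) hypotheses, so once the \(\max\) with \(\lambda_2\norm{K_c}_F\) absorbs the unbounded regime directly, coerciveness follows without any of the restrictions of Lemma~\ref{lem1}.
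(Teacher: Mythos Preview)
Your proposal is correct and follows essentially the same approach as the paper: split into the two regimes, use \(J_c(K_c)\ge \lambda_2\norm{K_c}_F\) for the unbounded case, and invoke the boundary argument from Lemma~\ref{lem1} to get \(\norm{\mathbf{T}_{stab}(K_c^l,\mathsf{z})}_\infty\to+\infty\) for the other. Your write-up is in fact more detailed than the paper's, which simply cites Lemma~\ref{lem1} for the boundary case and the definition \eqref{eq:new_cost} for the unbounded case.
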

\begin{proof}
As stated in the proof of Lemma \ref{lem1}, as \(K_c^l \rightarrow K_c^\dag \in \partial \mathcal{K}_c\), we observe that \(\| \mathbf{T}_{\text{stab}}(K_c,\mathbf{{z}})\|_{\infty} \rightarrow +\infty \). Consequently, this implies \(J_c(K_c) \rightarrow +\infty\). Moreover, when \(\|K_c^l\|_F \rightarrow + \infty\), according to the definition in \eqref{eq:new_cost}, it follows that \(J_c(K_c) \rightarrow +\infty\). This completes the proof.
\end{proof}
For the above new cost function, one can change $D_D$ in a free way, and the resultant cost function is coercive without the extra rank assumptions in the original statement of Lemma \ref{lem1}.

\paragraph{More discussions on convergence to stationary points.}
As mentioned in the main paper, once the coerciveness is proved, we can slightly modify the proof techniques in~\citep{guo2022global,guo2023complexity} to show convergence guarantees to stationary points for model-free $\mu$-synthesis. 

We provide more explanations here. Consider the constrained optimization problem
$\min_{K_c \in \mathcal{K}_c} J_c (K_c)$ with $J_c$ being coercive. Then we can directly apply the arguments in \cite{guo2022global} to obtain the following facts.
\begin{enumerate}
    \item For any \(\gamma > J_c^*\), the sublevel set defined as \(\mathcal{S}_\gamma:=\{K_c\in \mathcal{K}_c: J_c(K_c)\le \gamma \}\) is compact. In addition,  there exists a strict separation between $\mathcal{S}_\gamma$ and the complement set of $\mathcal{K}_c$, i.e. \(\dist(\mathcal{S}_\gamma, \partial \mathcal{K}_c)>0\). 
    \item If \(K_c^0 \in \mathcal{K}_c\), then Algorithm \ref{alo:NS} with \(\mu_{\delta},\mu_{\epsilon} < 1\) converges to a stationary point with probability one\footnote{This can be proved via modifying the proofs  in \cite[Theorem C.2]{guo2022global} and \cite[Theorem 3.8]{kiwiel2010nonderivative}.}.
\end{enumerate}

Furthermore, by adapting the arguments outlined in  \cite[Theorem 3.7]{guo2023complexity}, we can construct a proof demonstrating the complexity of Algorithm \ref{alg:DF_PO} for finding $(\delta,\epsilon)$-stationary points. 
\end{document}